\documentclass[11pt]{amsart}
\usepackage{amsmath,amssymb,amsthm,mathrsfs}
\usepackage{hyperref}
\usepackage[margin=1.1in]{geometry}
\newtheorem{theorem}{Theorem}[section]
\newtheorem{proposition}[theorem]{Proposition}
\newtheorem{lemma}[theorem]{Lemma}
\newtheorem{corollary}[theorem]{Corollary}
\theoremstyle{remark}
\newtheorem{remark}[theorem]{Remark}
\newcommand{\g}{\mathfrak g}
\newcommand{\kalg}{\mathfrak k}
\newcommand{\zalg}{\mathfrak z}
\newcommand{\salg}{\mathfrak s}
\newcommand{\Ad}{\operatorname{Ad}}
\newcommand{\tr}{\operatorname{tr}}

\newcommand{\C}{\mathbb C}
\newcommand{\R}{\mathbb R}

\title[Tam's star-shapedness conjecture]{On Tam's star-shapedness conjecture for numerical ranges of compact adjoint orbits}

\author{Yongzhi Luan}
\address{School of Mathematics, Shandong University, China}
\email{luanyongzhi@email.sdu.edu.cn}

\author{Tin-Yau Tam}
\address{Department of Mathematics and Statistics, University of Nevada, Reno, USA}
\email{ttam@unr.edu}

\date{\today}
\subjclass[2020]{17B20, 15A60, 22E46, 58J50}
\keywords{Star-shaped numerical range, compact adjoint orbit, Casimir operator, Laplace eigenfunction, nodal hypersurface}

\begin{document}
\begin{abstract}
We settle Tam's star-shapedness conjecture for numerical ranges associated with compact adjoint orbits of complex reductive Lie algebras, including its original formulation in terms of the inner product $B_\theta$. Let $\g$ be a complex semisimple Lie algebra, and let $K$ be a connected compact Lie group whose Lie algebra is a compact real form of $\g$. For any $x\in\g$ and any complex linear functional $f\in\g^*$, we prove that $\{f(\Ad(k)x):k\in K\}$ is star-shaped about zero. In the reductive case, the star center is given by the value of $f$ on the central component of $x$, which coincides with the $B_\theta$-pairing of the central components in Tam's original formulation. Our proof is uniform for all complex semisimple Lie algebras. Indeed, using the adjoint Casimir operator, we show that all coordinate functions of the orbit map share a common Laplace eigenvalue; we then use a nodal-hypersurface argument to prove star-shapedness for complex-valued functions in a single eigenspace. The geometric argument relies on a two-sided hypersurface-separation property, which holds whenever the first real cohomology group vanishes. In particular, we do not need simple connectivity or real analyticity. More generally, for finite-dimensional representations of compact connected semisimple Lie groups, we establish star-shapedness whenever the nonfixed component of the orbit vector belongs to a single positive Casimir eigenspace.
\end{abstract}
\maketitle

\section{Introduction and main results}
Let $\g$ be a complex semisimple Lie algebra, let $\kalg$ be a compact real form of $\g$, and let $K$ be a connected compact Lie group with Lie algebra $\kalg$. For $x\in\g$ and $f\in\g^*$, we put
\[
W_f(x):=\{f(\Ad(k)x):k\in K\}\subseteq\C.
\]
Let $B(\cdot, \cdot)$ denote the Killing form on the semisimple part of $\g$, and let
$\theta$ be the conjugation of $\g$ with respect to the compact real form $\kalg$.
Then the inner product
\[
 B_\theta(a,b):=-B(a,\theta b)
\]
is linear in its first argument and conjugate-linear in its second argument. In the reductive case, we equip the center
with any positive-definite $K$-invariant inner product and take its
orthogonal direct sum with the above form on $[\g,\g]$; we denote the
resulting inner product on $\g$ again by $B_\theta$. For $x,y\in\g$, we set
\[
 W_x(y):=\{B_\theta(x,\Ad(k)y):k\in K\}.
\]
In 2001, Tam formulated the star-shapedness conjecture
\cite[Conjecture~5.1]{Tam2001}: for a complex reductive
Lie algebra $\g=\salg\oplus\zalg$ with decompositions
$x=x_{\salg}+x_{\zalg}$ and $y=y_{\salg}+y_{\zalg}$,
the set $W_x(y)$ is star-shaped about $B_\theta(x_{\zalg},y_{\zalg})$.
Later, \DJ okovi\'c and Tam
\cite[Conjecture~3.8]{DjokovicTam} stated an equivalent formulation in the semisimple
case using an arbitrary complex linear functional, namely that
$f(\Ad(K)x)$ is star-shaped about zero for every $x\in\g$ and
$f\in\g^*$ (see also \cite[Conjecture~2.10]{TamCheung}).
Indeed, every functional $f\in\g^*$ can be written uniquely as
$f(a)=B_\theta(a,y)$ for some $y\in\g$, and by the conjugate symmetry of $B_\theta$,
interchanging the two arguments does not affect star-shapedness.

The conjecture was motivated by the 1996 result of Cheung and Tsing \cite{CheungTsing}, who proved that the $C$-numerical range of any $n\times n$ complex matrix $A$ is star-shaped about $(\operatorname{tr}A)(\operatorname{tr}C)/n$. This result naturally led to the question of whether a similar star-shapedness property holds for numerical ranges associated with compact adjoint orbits of complex reductive Lie algebras, which Tam formulated as a conjecture in 2001 \cite{Tam2001}.

Subsequently, several partial results were obtained for individual Dynkin types.
In 2003, \DJ okovi\'c and Tam \cite[Theorem~3.11]{DjokovicTam} proved the conjecture for simply laced types without $E_8$ factors. In particular, their result settled \emph{types $D$, $E_6$, and $E_7$}, and also recovered type $A$.
In 2011, Cheung and Tam \cite[Theorem~4.8]{CheungTam} settled \emph{type $B$}, and their method also applies to type $D$. In addition, they proved a distinct vector-valued result: the orthogonal projection (with respect to the Killing form) of any compact adjoint orbit onto $\mathfrak h=\mathfrak t\oplus i\mathfrak t$, where $\mathfrak t$ is a maximal abelian subalgebra of $\kalg$, is star-shaped about zero \cite[Theorem~3.1]{CheungTam}.
However, \emph{type $C_n$ for $n\ge3$, as well as the exceptional types $E_8$, $F_4$, and $G_2$}, remained open. In this paper, we settle the conjecture for all complex semisimple Lie algebras by a single unified argument, and we recover its original reductive formulation with the prescribed affine star center.

Our approach here differs substantially from the earlier proofs. While Cheung and Tsing worked with unitary matrix orbits, \DJ okovi\'c and Tam as well as Cheung and Tam used Lie-theoretic reductions tailored to specific root systems. By contrast, we neither invoke the classification of root systems nor treat individual Dynkin types case by case. Instead, using the adjoint Casimir operator, we show that all coordinate functions of the orbit map lie in a single Laplace eigenspace. We then apply a geometric argument based on nodal hypersurfaces, their separation properties, and Green's identity to obtain the full radial segments required for star-shapedness. In this way, we get a uniform proof for all complex semisimple Lie algebras, and the reductive case follows immediately by translation.

\begin{theorem}[Semisimple star-shapedness]\label{thm:semisimple}
Let $\g$ be a complex semisimple Lie algebra, let $\kalg$ be a compact real form of $\g$, and let $K$ be a connected compact Lie group with Lie algebra $\kalg$, acting on $\g$ by the complex-linear extension of its adjoint representation. Then for every $x\in\g$ and $f\in\g^*$, one has that
\[
tW_f(x)\subseteq W_f(x)\qquad(0\leq t\leq1).
\]
\end{theorem}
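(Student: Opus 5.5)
We follow the strategy the introduction announces: a spectral fact about the orbit map, then a nodal-domain argument.

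\emph{Reductions.} Fix $x\in\g$ and $f\in\g^*$, and consider the smooth function $F\colon K\to\C$, $F(k)=f(\Ad(k)x)$. Equip $K$ with the bi-invariant metric induced by $-B|_{\kalg}$. Since $\g$ is semisimple, $\g$ splits into simple ideals. Each ideal is $\Ad(K)$-stable and complexifies a simple ideal of $\kalg$. On each simple ideal the adjoint Casimir element of $\kalg$ (normalized by the Killing form of that ideal) acts by the scalar $1$. Write $x=\sum_j x_j$ according to this decomposition. Because $K$ is connected, its image in $\operatorname{Aut}(\g)$ is a product of the corresponding simple factors $K_j$. The Laplacian on $K$ is the Casimir acting via right translation, so each coordinate function $k\mapsto f(\Ad(k)x_j)$ satisfies $\Delta F_j=-F_j$. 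Here the Casimir computed with respect to $-B$ restricted to the $j$-th ideal equals the Casimir of that ideal with respect to its own Killing form, which is $1$. Hence $\Delta F=-F$: the real and imaginary parts of $F$ both lie in the same eigenspace, for the eigenvalue $\lambda=1>0$. In particular $\int_K F=0$, so $0\in\operatorname{conv}W_f(x)$. Since $W_f(x)=F(K)$ is compact and connected, it suffices to show the following: for every $k_0$ with $F(k_0)=w\neq0$, the whole segment $[0,w]$ lies in $F(K)$.

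\emph{The nodal argument.} Rotate so that $w>0$ is real, replacing $f$ by $e^{-i\arg w}f$. Set $u=\operatorname{Re}F$ and $v=\operatorname{Im}F$; both satisfy $\Delta u=-u$ and $\Delta v=-v$. Suppose, for contradiction, that some $t w$ with $0\le t<1$ is missed. Then the closed set $\{v=0\}\cap\{u\le tw\}$ contains no point with $u=tw$. Consider the component $\Omega$ of $\{u>tw\}$ containing $k_0$. The plan is to show that $v$ has no zero on $\partial\Omega$ and on suitable nodal pieces, which forces $v$ to have a definite sign on a region where the eigenfunction identities fail.

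Concretely, we intend to use the Green's identity
\[
\int_\Omega (u\,\Delta v - v\,\Delta u) = \int_{\partial\Omega}\bigl(u\,\partial_\nu v - v\,\partial_\nu u\bigr),
\]
whose left side vanishes because the eigenvalues agree. On $\partial\Omega$ we have $u=tw$, so the identity reduces to $tw\int_{\partial\Omega}\partial_\nu v=\int_{\partial\Omega}v\,\partial_\nu u$. Also $\int_{\partial\Omega}\partial_\nu v=\int_\Omega\Delta v=-\int_\Omega v$. If $v$ has a fixed sign on $\Omega$, then the two sides have opposite signs, because $\partial_\nu u\le0$ on the outer boundary. This gives a contradiction unless $v\equiv0$, and in that degenerate case the segment is trivially attained along $u$ by connectivity of $K$. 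Sard's theorem will be used to choose regular levels near $tw$, and the general case will follow by a limiting argument.

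\emph{The main obstacle.} The hard step is the two-sided separation property. We need that the level hypersurface $\{v=0\}$, or a smoothed version $\{v=\varepsilon\}$, separates $K$ into sides, so that $v$ has a fixed sign on each relevant region avoiding the missing point. On a manifold with $H^1(K;\R)=0$, every closed embedded two-sided hypersurface separates, since its Poincaré dual class would otherwise give a nonzero element of $H^1$. This holds for compact semisimple $K$, whose $H^1(K;\R)=0$ because $\pi_1(K)$ is finite. We would apply this to a regular level set of the map $F$ composed with a suitable angular coordinate around the missing point. The argument goes as follows: if $tw\notin F(K)$, then $\arg(F-tw)$ is defined on all of $K$. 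Since $H^1(K;\R)=0$, this argument lifts to a real function. Its regular level sets are separating hypersurfaces, along which $F-tw$ lies on a fixed ray. The Green's identity argument above, applied on one side of such a hypersurface, then yields the contradiction.
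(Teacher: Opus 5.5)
Your spectral step is correct (the paper gets $\Delta F=-F$ in one stroke from $\sum_j(\operatorname{ad}E_j)^2=-I$, without splitting into simple ideals). The nodal argument, however, has a genuine gap: Green's identity is applied on the wrong domain.

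\emph{The domain $\Omega$ does not work.} On $\Omega$, the component of $\{u>tw\}$ containing $k_0$, the function $v$ can never have a fixed sign. Indeed $v(k_0)=\operatorname{Im}w=0$ at the interior point $k_0$, and $\Delta v=-v$, the strong maximum principle and unique continuation force $v$ to take both signs near $k_0$ unless $v\equiv0$. Even granting a fixed sign, the claimed sign clash is false for $t>0$. If $v>0$ on $\overline\Omega$, your identity reads $-tw\int_\Omega v=\int_{\partial\Omega}v\,\partial_\nu u$, and \emph{both} sides are $\le 0$. Only $t=0$ gives a contradiction (via Hopf's lemma), and that yields merely $0\in W_f(x)$.

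\emph{The repair via $\phi=\arg(F-tw)$ does not help.} On $\{\phi=\alpha\}$ one has $\operatorname{Im}(e^{-i\alpha}F)=-tw\sin\alpha$. So for a regular value $\alpha\neq0$ this is a \emph{nonzero} level set of an eigenfunction. The boundary term $\tilde v\,\partial_\nu\tilde u$ in Green's identity then no longer drops out, and nothing forces a contradiction. On the other hand, $\alpha=0$ need not be a regular value.

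\emph{The missing idea.} Take as domain one side $D$ of a connected component $C$ of a \emph{regular} zero set of the imaginary part $v$; this is where the separation property enters. On $C=\partial D$ one has $v\equiv0$, so Green's identity reduces to $\int_C u\,\partial_\nu v\,dS=0$, with $\partial_\nu v=\pm|\nabla v|$ of constant sign on $C$. Hence $u$ vanishes somewhere on $C$. If $C$ passes through a point where $u=w$, connectedness gives $u(C)\supseteq[0,w]$, and since $F=u$ on $C$, this is the whole segment.

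\emph{How to get regularity.} Perturb the functional $f$ inside $\g^*$, not the level. Every $\ell\circ a_x$ remains a $-1$ eigenfunction, and a parametric Sard argument (using $a_x(k)\neq0$) makes $0$ a regular value of $\operatorname{Im}(\ell\circ a_x)$ for a dense set of $\ell$. You then need:
\begin{itemize}
\item the local sign change of $v$ at $k_0$, to produce zeros $p_j\to k_0$ of the perturbed imaginary parts;
\item a compactness limit of points $q_j\in C_j$ with $F_j(q_j)=tF_j(p_j)$.
\end{itemize}

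Your contradiction framing can be kept. Since $tw\notin F(K)$ is an open condition, it survives small perturbations of $f$. The perturbed nodal set then splits into the disjoint closed pieces $\{u_j>tw\}$ and $\{u_j<tw\}$. So the component through $p_j$ lies in $\{u_j>tw\}\subseteq\{u_j>0\}$, contradicting Green's identity. But these are exactly the ingredients your proposal lacks.
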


In the original reductive setting \cite[Conjecture~5.1]{Tam2001}, the star center is a natural \emph{affine} point. Let us write a complex reductive Lie algebra as $\g=\zalg\oplus\salg$, where $\zalg=Z(\g)$ is the center and $\salg=[\g,\g]$ is the semisimple derived subalgebra, and decompose $x\in\g$ as $x=x_{\zalg}+x_{\salg}$.

\begin{corollary}\label{cor:reductive}
Let $\g$ be a complex reductive Lie algebra and let $K$ be a connected compact Lie group whose Lie algebra is a compact real form of $\g$, acting on $\g$ by the adjoint representation. Then for every $x\in\g$ and $f\in\g^*$, the set $W_f(x)$ is star-shaped about $f(x_{\zalg})$. More precisely, we have
\[
tW_f(x)+(1-t)f(x_{\zalg})\subseteq W_f(x)
\quad(0\leq t\leq1).
\]
Equivalently, for any $a,b\in\g$, the set
$W_a(b)=\{B_\theta(a,\Ad(k)b):k\in K\}$ is star-shaped
about $B_\theta(a_{\zalg},b_{\zalg})$, as conjectured in
\cite[Conjecture~5.1]{Tam2001}.
\end{corollary}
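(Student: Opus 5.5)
We deduce Corollary~\ref{cor:reductive} from Theorem~\ref{thm:semisimple} by translation. Write $\g=\zalg\oplus\salg$ with $\salg=[\g,\g]$ semisimple. Let $\kalg$ be the compact real form and $K$ the connected group. Then $\kalg=(\kalg\cap\zalg)\oplus[\kalg,\kalg]$, and $[\kalg,\kalg]$ is a compact real form of $\salg$. Both summands are $\Ad(K)$-stable, and $K$ acts trivially on $\zalg$: $\Ad(K)$ is generated by $\exp(\operatorname{ad}\kalg)$, and $\operatorname{ad}$ kills $\zalg$. Hence, for $x=x_{\zalg}+x_{\salg}$,
\[
f(\Ad(k)x)=f(x_{\zalg})+f|_{\salg}(\Ad(k)x_{\salg}),
\]
so $W_f(x)=f(x_{\zalg})+W'$ with $W'=\{f|_{\salg}(\Ad(k)x_{\salg}):k\in K\}$.

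Next we identify $W'$ with a set covered by Theorem~\ref{thm:semisimple}. The image of $K$ in $GL(\salg)$ under $\Ad$ is a connected compact group with Lie algebra $\operatorname{ad}(\kalg)|_{\salg}\cong[\kalg,\kalg]$. Equivalently, take $K_s$ to be the connected subgroup of $K$ with Lie algebra $[\kalg,\kalg]$; it is compact because $[\kalg,\kalg]$ is semisimple. We have $K=Z(K)^0K_s$, and $Z(K)^0$ acts trivially, so $\Ad(K)|_{\salg}=\Ad(K_s)|_{\salg}$. Theorem~\ref{thm:semisimple} applied to $\salg$, $K_s$, $x_{\salg}$ and $f|_{\salg}\in\salg^*$ gives $tW'\subseteq W'$ for $0\le t\le1$. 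Therefore
\[
tW_f(x)+(1-t)f(x_{\zalg})=f(x_{\zalg})+tW'\subseteq W_f(x).
\]

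Finally, we treat the $B_\theta$ formulation. Set $f(\cdot)=\overline{B_\theta(a,\cdot)}$. This is complex-linear, since $B_\theta$ is conjugate-linear in its second argument. Then $W_a(b)$ is the complex conjugate of $W_f(b)$, and conjugation preserves star-shapedness about the conjugate center. The $B_\theta$ form is the orthogonal sum of forms on $\zalg$ and $\salg$, so $f(b_{\zalg})=\overline{B_\theta(a_{\zalg},b_{\zalg})}$. Conjugating back gives star-shapedness of $W_a(b)$ about $B_\theta(a_{\zalg},b_{\zalg})$.

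The only mildly delicate point is the reduction from $K$ to a group covered by Theorem~\ref{thm:semisimple}. The theorem allows any connected compact group whose Lie algebra is a compact real form, so $K_s$ (or the image $\Ad(K)|_{\salg}$) qualifies, and no further obstacle arises.
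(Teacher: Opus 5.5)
Your proof is correct and follows the paper's argument. You split off the central component, on which the connected group $K$ acts trivially. You then pass to the compact semisimple subgroup with Lie algebra $\kalg\cap\salg=[\kalg,\kalg]$, apply Theorem~\ref{thm:semisimple}, and translate by $f(x_{\zalg})$.

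The only deviation is in the $B_\theta$ reformulation. You use the complex-linear functional $\overline{B_\theta(a,\cdot)}$ on the orbit of $b$ and then take complex conjugates. The paper instead uses $K$-invariance of $B_\theta$ and $k\mapsto k^{-1}$ to write $W_a(b)=W_{f_b}(a)$ with $f_b=B_\theta(\cdot,b)$. Both routes are valid, and yours does not need the invariance of $B_\theta$.
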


Our proof of Theorem~\ref{thm:semisimple} is based on a geometric theorem for a single Laplace eigenspace. It is worth noting that Gichev's theorem on common zeros \cite[Theorem~1]{Gichev}, combined with our adjoint Casimir calculation in Section 4, already implies that $0\in W_f(x)$ (see also \cite[Corollary~2]{Gichev}). To go further, we build on Gichev's argument using Green's identity argument for regular nodal curves on the two-sphere \cite[Lemma~3 and Proposition~1]{Gichev}, and combine it with a perturbation argument to obtain the entire radial segments. In Section 3, we establish this geometric principle on closed manifolds satisfying a two-sided hypersurface-separation property; in particular, a sufficient topological condition is the vanishing of the first de Rham cohomology group $H^1(M;\R)$, as in the setting of Gichev's theorem on common zeros.

\section{Preliminaries on adjoint orbits and numerical ranges}

Let us briefly recall the basic setup of the conjecture. Every complex
semisimple Lie algebra $\g$ admits a compact real form $\kalg$, so that
$\g=\kalg\oplus i\kalg$ as a direct sum of real vector spaces. Let $K$ be a
connected compact Lie group with Lie algebra $\kalg$. Then the adjoint action of $K$
on $\g$ is complex-linear and preserves the complex Lie bracket.
For any $x\in\g$, the adjoint orbit
\[
\mathcal O_x:=\Ad(K)x=\{\Ad(k)x:k\in K\}
\]
is compact and connected. Since the numerical range $W_f(x)$ is the image
of $\mathcal O_x$ under the complex linear functional $f$, it is
a compact connected subset of $\C$, though in general it need
not be convex. Note that connectedness alone does not guarantee
star-shapedness: for instance, the unit circle is connected, but it does not contain the radial
segments joining its points to the origin.

Note also that the choice of the connected compact Lie group $K$ with Lie algebra $\kalg$
does not affect the orbit $\mathcal O_x$. Indeed, the adjoint action of $K$ factors
through the adjoint group $\operatorname{Int}(\kalg)$, so any two connected compact Lie groups with Lie algebra
$\kalg$ have the exact same image in $\operatorname{Aut}(\g)$.
Therefore, in the proof of Theorem~\ref{thm:semisimple}, we may pass to the simply connected covering
group $\widetilde K$ without changing the numerical range $W_f(x)$.

Let $B$ be the complex Killing form of $\g$, and define the conjugation
$\theta(X+iY):=X-iY$ for $X,Y\in\kalg$. Then $\theta$ is a
conjugate-linear automorphism of $\g$ as a real Lie algebra.
Since $B$ is negative definite on $\kalg$, the form
\[
B_\theta(a,b):=-B(a,\theta b)
\]
is a inner product on $\g$, linear in the first variable and
conjugate-linear in the second. Since $\theta$ commutes with $\Ad(k)$ for all $k\in K$, one has that
\[
B_\theta(\Ad(k)a,\Ad(k)b)=B_\theta(a,b),\qquad k\in K.
\]
This is the inner product used in Tam's original formulation
\cite{Tam2001}. Since $B_\theta$ is nondegenerate, we can use it to identify $\g$ conjugate-linearly with its complex dual $\g^*$. Namely,
for every complex linear functional $f\in\g^*$, there exists a unique element $y\in\g$
such that $f(a)=B_\theta(a,y)$ for all $a\in\g$.
Consequently, the numerical ranges $W_f(x)$ defined via linear functionals $f\in\g^*$ and
the sets $W_x(y)$ defined via $B_\theta$ describe the exact same family of subsets
of $\C$.

When $\g=\zalg\oplus\salg$ is a complex reductive Lie algebra, the Killing form
vanishes on the center $\zalg$ and thus does not give an inner product on
all of $\g$. Instead, we take the orthogonal direct sum of $B_\theta$ on $\salg$ with any
inner product on $\zalg$. Since $K$ is connected, it acts trivially on $\zalg$.
Therefore, for $x=x_\zalg+x_\salg$, we have
\begin{equation}\label{eq:affine-decomposition}
W_f(x)=f(x_\zalg)+W_{f|_\salg}(x_\salg).
\end{equation}
From \eqref{eq:affine-decomposition}, one sees immediately why the star center in the reductive case
is generally nonzero, and why the semisimple case in Theorem~\ref{thm:semisimple} is the main step.

Recall that a subset $S\subseteq\C$ is \emph{star-shaped} about
$c\in S$ if
\[
tz+(1-t)c\in S\qquad\text{for all }z\in S\text{ and }0\leq t\leq1.
\]
Thus, the property $0\in W_f(x)$ is much weaker than
star-shapedness about zero, as the latter requires the \emph{entire} radial
segment from $0$ to $f(\Ad(k)x)$ to lie in $W_f(x)$ for every $k\in K$. In our proof,
we first use Green's identity to find zeros on connected components of regular nodal hypersurfaces,
and then use the connectedness of these components to obtain all intermediate values along the radial segment.
For this reason, we work with connected components of regular nodal sets rather than
merely applying existence theorems for common zeros.

Let us summarize the three main steps of our proof. First, in Section 3, we prove a general geometric principle:
if all coordinate functions of a nonvanishing smooth vector-valued map on a
closed Riemannian manifold $M$ lie in a single positive Laplace eigenspace,
and every connected two-sided hypersurface separates $M$, then every complex-linear projection of the image is star-shaped about zero. Second, in Section 4, for
a compact semisimple Lie group $K$ equipped with its bi-invariant Killing-form metric,
we use the adjoint Casimir operator to show that all coordinate functions of the orbit map
share the same Laplace eigenvalue $\lambda=1$. Third, by passing to the universal covering group $\widetilde K$, we guarantee the hypersurface-separation property and complete the proof of Theorem~\ref{thm:semisimple}; Corollary~\ref{cor:reductive} then follows from
\eqref{eq:affine-decomposition}. None of these steps uses the
classification of simple Lie algebras.

\section{A geometric star-shapedness principle}
Throughout this section, all manifolds are smooth Riemannian manifolds without boundary, and a closed manifold means a compact manifold without boundary. We write the Laplace--Beltrami operator with the sign convention $\Delta v:=\operatorname{div}(\nabla v)$, where $\nabla v$ is the Riemannian gradient of $v$ and $\operatorname{div}$ is the divergence. Under this convention, the equation $\Delta v=-\lambda v$ corresponds to an eigenvalue $\lambda$ of $-\Delta$. We say that a connected closed manifold $M$ has the \emph{two-sided separation property} if every nonempty connected, compact, embedded, two-sided hypersurface $C\subset M$ without boundary separates $M$ into two connected components. Here, ``two-sided'' means that the normal bundle of $C$ in $M$ is trivial, i.e., $C$ admits a global unit normal vector field.

\begin{lemma}[A sufficient topological condition]\label{lem:separation}
Let $M$ be a connected closed smooth manifold whose first de Rham cohomology group $H^1(M;\R)$ vanishes. Then $M$ has the two-sided separation property. In particular, every closed simply connected manifold has this property. Moreover, for any hypersurface $C\subset M$ as above, the closure of each connected component of $M\setminus C$ is a compact smooth manifold with boundary $C$.
\end{lemma}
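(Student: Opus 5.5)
We argue by contradiction, using a closed $1$-form dual to $C$. Let $C\subset M$ be a nonempty connected, compact, embedded, two-sided hypersurface, and fix a global unit normal field $\nu$ along $C$. By the tubular neighborhood theorem and two-sidedness, there is an embedding $\Phi\colon C\times(-1,1)\to M$ onto an open neighborhood $U$ of $C$, with $\Phi(p,0)=p$ and $\partial_s\Phi(p,0)=\nu(p)$. Choose a smooth function $\rho\colon(-1,1)\to[0,1]$ that equals $0$ near $-1$ and $1$ near $1$. Define $\omega:=d(\rho\circ s)$ on $U$, where $s$ is the second coordinate, and extend $\omega$ by $0$ outside $U$. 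This gives a smooth closed $1$-form on $M$: it is locally exact and vanishes near $\partial U$.

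First we show $M\setminus C$ is disconnected. Suppose instead that it is connected. Take a point $p\in C$ and a short arc crossing $C$ transversally once, from $\Phi(p,-1/2)$ to $\Phi(p,1/2)$. Close it up by a path in $M\setminus C$ joining these endpoints; such a path exists by connectedness. Call the resulting loop $\gamma$. We compute $\int_\gamma\omega$ by lifting. Along the transverse arc, $\omega$ is exact, and the integral is $\rho(1/2)-\rho(-1/2)$. With $\rho$ chosen to make the jump across the slab equal to $1$, the whole slab contributes total $1$, while every excursion of the path in $M\setminus C$ contributes $0$.

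To avoid bookkeeping, it is cleaner to define $\omega=dF$ on $M\setminus C$, where $F:=\rho\circ s$ on $U\setminus C$ and $F$ is extended locally constantly. Concretely, the side $\{s>0\}$ of $U$ connects to $\{s<0\}$ inside $M\setminus C$. Then $\int_\gamma\omega$ equals the jump of $\rho$ across $s=0$ when $\rho$ is a step placed at $s=0$. For a rigorous version, take $\rho$ supported in $(-\epsilon,\epsilon)$ as a bump and set $\omega=\rho(s)\,ds$. Then $\int_\gamma\omega=\int\rho=1\neq0$, since $\gamma$ meets the support of $\omega$ only on the transverse arc. So $\omega$ is not exact, contradicting $H^1(M;\R)=0$ via de Rham's theorem.

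Next we show there are at most two components. Every component of $M\setminus C$ has $C$ in its closure, since $M$ is connected and $C$ is a closed subset of measure zero. Near $C$, the set $M\setminus C$ looks like $C\times(-1,0)\sqcup C\times(0,1)$, and each piece is connected because $C$ is connected. So every component contains one of these two collar pieces, and there are at most two components. Hence there are exactly two components, $M_+\supset\Phi(C\times(0,1))$ and $M_-$.

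For the boundary statement, $\overline{M_\pm}=M_\pm\cup C$, and near $C$ this closure is $\Phi(C\times[0,1))$ or $\Phi(C\times(-1,0])$. So it is a compact smooth manifold with boundary exactly $C$. Finally, if $M$ is simply connected, then $H^1(M;\R)\cong\operatorname{Hom}(\pi_1(M),\R)=0$, so the lemma applies.

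The main obstacle is the nonexactness step. One must check that a loop crossing $C$ exactly once pairs nontrivially with the bump form $\rho(s)\,ds$. This works because the rest of the loop avoids the collar support, which we can arrange by shrinking the support to $|s|<\epsilon$ and starting the connecting path at $s=\pm1/2$.
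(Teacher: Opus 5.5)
Your argument is correct in substance, but it reaches the contradiction by a different route from the paper. The paper argues homologically. The loop $\gamma$ has signed intersection number $+1$ with $C$, and intersection number with $C$ is asserted to be a homotopy invariant, giving a nonzero homomorphism $H_1(M;\mathbb Z)\to\mathbb Z$. The universal coefficient theorem then turns this into a nonzero class in $H^1(M;\R)$. You instead write down a de Rham representative of that class directly: the collar form $\omega=\rho(s)\,ds$ dual to $C$, which you detect by $\int_\gamma\omega=1$.

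Your route uses the hypothesis exactly as stated (de Rham cohomology). It needs only that exact forms integrate to zero over closed piecewise smooth loops. So you need none of the following: de Rham's theorem (which you invoke unnecessarily), the universal coefficient theorem, the homotopy invariance of intersection numbers, or the smoothing of $\gamma$; the paper only sketches the last two. The paper's version is shorter to state and shows the topological meaning of the obstruction. Your second half is the same as the paper's: at most two components, because each must contain one of the two connected collar sides, and the half-collar charts give the closures.

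Three points should be tightened.

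\emph{The support claim.} For a fixed $\epsilon$, the claim that $\gamma$ meets the support of $\omega$ only on the transverse arc is false, since the connecting path in $M\setminus C$ may re-enter the slab $0<|s|<\epsilon$. It becomes true once you choose $\epsilon$ after the path. The path is a compact subset of $M\setminus C$ and $\bigcap_{\epsilon>0}\Phi(C\times[-\epsilon,\epsilon])=C$, so the path misses $\Phi(C\times[-\epsilon,\epsilon])$ for small $\epsilon$.

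Alternatively, you can avoid choosing $\epsilon$ after the path. Set $R(s)=\int_{-1}^s\rho$ and define $G$ on $M\setminus C$ by $G=R(s)-1$ on $\Phi(C\times(0,1))$, $G=R(s)$ on $\Phi(C\times(-1,0))$, and $G=0$ elsewhere. Then $G$ is smooth on $M\setminus C$ and $\omega=dG$ there. For $\epsilon<1/2$, $G$ vanishes at $\Phi(p,\pm1/2)$, so every connecting path in $M\setminus C$ contributes zero.

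\emph{The ``locally constant'' version.} Delete the intermediate version in which $F$ is ``extended locally constantly''. If $M\setminus C$ were connected, no locally constant function on it could take different values on the two sides. So that step is the contradiction itself, not a construction.

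\emph{The components argument.} ``Measure zero'' plays no role in showing that each component of $M\setminus C$ has a point of $C$ in its closure. The reason is that a component whose closure missed $C$ would be both open and closed in $M$, hence equal to $M$, contradicting $C\neq\emptyset$.
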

\begin{proof}
We fix a global unit normal vector field along $C$, which gives $C$ a coorientation. Suppose on the contrary that $M\setminus C$ is connected. Let us choose a short smooth arc in $M$ that crosses $C$ transversely at a single point in the positive normal direction, with its two endpoints lying on opposite sides of $C$. Since $M\setminus C$ is connected, we can join these two endpoints by a piecewise smooth path inside $M\setminus C$. After smoothing the concatenated path at the junctions, we get a smooth closed loop $\gamma$ in $M$ that intersects $C$ transversely at a single point with signed intersection number $+1$.

On the other hand, using the coorientation of $C$, we can define the integer-valued signed intersection number with $C$ for any closed loop transverse to $C$ by summing the signs of all crossings. Since crossings are created or canceled in pairs of opposite signs under smooth homotopies, this intersection number is a homotopy invariant and thus induces a group homomorphism $H_1(M;\mathbb Z)\to\mathbb Z$. Since the loop $\gamma$ has intersection number $+1$, this homomorphism is nonzero. By the universal coefficient theorem, it gives a nonzero cohomology class in $H^1(M;\R)$, which contradicts the assumption $H^1(M;\R)=0$. Note that we do not need $M$ to be orientable for this argument; a coorientation of $C$ suffices.

Therefore, $M\setminus C$ is disconnected. Now take a tubular neighborhood of $C$ diffeomorphic to $C\times(-\varepsilon,\varepsilon)$. Removing $C$ leaves two connected sides $C\times(-\varepsilon,0)$ and $C\times(0,\varepsilon)$. Since $C$ is connected, each side lies in a single connected component of $M\setminus C$; meanwhile, every connected component of $M\setminus C$ must intersect this tubular neighborhood and hence must contain at least one of the two sides. Thus $M\setminus C$ has at most two connected components. Since $M\setminus C$ is disconnected, it has exactly two connected components, and the closure of each component is a compact smooth manifold with boundary equal to $C$. This finishes the proof.
\end{proof}

\begin{lemma}[Green's identity on a nodal component]\label{lem:green}
Suppose that $M$ is a connected closed Riemannian manifold with the two-sided separation property. Let $u,v$ be smooth real-valued functions on $M$ satisfying
\[
\Delta u=-\lambda u,\qquad \Delta v=-\lambda v
\]
for the same real number $\lambda$. If $0$ is a regular value of $v$, then for every connected component $C$ of $v^{-1}(0)$, the image $u(C)$ is a compact interval containing zero.
\end{lemma}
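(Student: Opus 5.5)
The plan is to combine Lemma~\ref{lem:separation} with Green's identity on one side of $C$. Since $0$ is a regular value of $v$, the set $v^{-1}(0)$ is a compact embedded hypersurface. Its connected component $C$ is closed and therefore compact. The gradient $\nabla v$ does not vanish on $C$, so $\nabla v/|\nabla v|$ is a global unit normal field. Hence $C$ is two-sided. By the two-sided separation property, $M\setminus C$ has exactly two connected components. Lemma~\ref{lem:separation} only records the boundary statement under the $H^1$ hypothesis, but the tubular-neighbourhood argument in its proof uses only the separation itself. So the closure $\Omega$ of either component is a compact smooth manifold with boundary $\partial\Omega=C$. Because $C$ is compact and connected and $u$ is continuous, $u(C)$ is a compact interval. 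It therefore suffices to show that $u$ takes both a value $\ge0$ and a value $\le0$ on $C$; equivalently, that $u$ cannot be strictly positive or strictly negative on all of $C$.

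For this I apply Green's second identity on $\Omega$:
\[
\int_\Omega (u\,\Delta v - v\,\Delta u)\,dV=\int_{C}\bigl(u\,\partial_\nu v - v\,\partial_\nu u\bigr)\,dS ,
\]
where $\nu$ is the outward unit normal along $C$. The left side is $\int_\Omega(-\lambda uv+\lambda vu)\,dV=0$. On $C$ we have $v=0$, so the identity reduces to
\[
\int_C u\,\partial_\nu v\,dS=0 .
\]
Orientability of $M$ is not needed here: the divergence theorem holds with the Riemannian density on a possibly nonorientable manifold with boundary.

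The remaining task is to show that $\partial_\nu v$ has a constant strict sign on $C$. This is the step I expect to need the most care, and I would handle it as follows. The function $v$ has no zeros in the interior of $\Omega$, because $\Omega^\circ$ is a connected component of $M\setminus C$, and points of other components of $v^{-1}(0)$ lie off $\Omega$... Strictly speaking, other nodal components could lie inside $\Omega^\circ$, so this claim is not automatic. It is not needed, however: near $C$ the sign of $v$ on the $\Omega$ side is constant. Indeed, $\nabla v\neq0$ on $C$, so $v$ has a fixed sign on each side of $C$ within a thin tubular neighbourhood $C\times(-\varepsilon,\varepsilon)$, because $C$ is connected and $\nabla v$ is a nonvanishing normal field. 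The collar $C\times[0,\varepsilon)$ lies in $\Omega$, so $v$ has one sign, say $v<0$, on it. Then $\partial_\nu v=|\nabla v|>0$ everywhere on $C$, since $\nabla v$ points outward from the region where $v<0$. If $u>0$ on all of $C$ (or $u<0$ on all of $C$), the integral $\int_C u\,\partial_\nu v\,dS$ would be strictly positive (or strictly negative), contradicting the identity above.

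Therefore $u$ takes a nonpositive value and a nonnegative value on $C$. By connectedness of $C$ and the intermediate value theorem, $0\in u(C)$, and $u(C)$ is a compact interval containing zero.
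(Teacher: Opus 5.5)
Your proposal is correct and follows the paper's proof essentially step for step: Green's second identity on the closure of one side of $C$ reduces to $\int_C u\,\partial_\nu v\,dS=0$, and since $\partial_\nu v$ has a constant strict sign on the connected set $C$, $u$ must vanish somewhere on $C$. Your collar argument for that sign is a more explicit version of the paper's observation that $\nu=\pm\nabla v/|\nabla v|$ with a sign that is constant on connected $C$. The abandoned sentence claiming that $v$ has no zeros in $\Omega^\circ$ should simply be deleted, since, as you noticed yourself, it is not true in general and is never used.
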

\begin{proof}
Since $0$ is a regular value of $v$, the zero locus $v^{-1}(0)$ is a compact embedded hypersurface in $M$, and each connected component $C$ admits the global unit normal vector field $\nabla v/|\nabla v|$. By the two-sided separation property, we can choose a connected component $D$ of $M\setminus C$ whose closure $\overline D$ has smooth boundary $C$. Let $\nu$ denote the outward unit normal along $C=\partial D$. By Green's identity on $\overline D$, we have
\[
0=\int_D(u\Delta v-v\Delta u)\,dV
=\int_C(u\partial_\nu v-v\partial_\nu u)\,dS
=\int_Cu\,\partial_\nu v\,dS.
\]
Here $dV$ and $dS$ denote the Riemannian volume measure on $M$ and the induced surface measure on $C$, respectively, so the Divergence Theorem and Green's identity hold even if $M$ is non-orientable. Since $C$ is connected and $\nabla v$ never vanishes on $C$, the normal derivative $\partial_\nu v$ is either strictly positive ($+|\nabla v|$) everywhere on $C$ or strictly negative ($-|\nabla v|$) everywhere on $C$. If $u$ had no zero on $C$, then $u$ would also have a constant nonzero sign on $C$, so the integral $\int_C u\,\partial_\nu v\,dS$ would be nonzero, a contradiction. Therefore, we have $0\in u(C)$. Since $C$ is compact and connected and $u$ is continuous, it follows that $u(C)$ is a compact interval containing zero.
\end{proof}

\begin{lemma}[Local sign change]\label{lem:sign}
Let $M$ be a connected smooth Riemannian manifold, and let $v$ be a nonzero smooth real-valued function on $M$ satisfying $\Delta v=-\lambda v$ with $\lambda>0$. Then every neighborhood of any zero of $v$ contains both a point where $v>0$ and a point where $v<0$.
\end{lemma}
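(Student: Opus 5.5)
Fix a zero $p$ of $v$ and a neighborhood $U$ of $p$, which we may shrink to a connected coordinate ball. We argue by contradiction and treat the two signs symmetrically: replacing $v$ by $-v$ (still an eigenfunction with the same $\lambda$), it suffices to rule out $v\ge 0$ on $U$. So we suppose $v\ge0$ on $U$ with $v(p)=0$.

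The key observation is the sign of $\Delta v$ near $p$. On $U$ we have $\Delta v=-\lambda v\le 0$, since $\lambda>0$. Thus $v$ is a nonnegative superharmonic function on $U$ with respect to the uniformly elliptic operator $\Delta$ in local coordinates; its principal part is $g^{ij}\partial_i\partial_j$ plus first-order terms, with no zeroth-order term. Moreover, $v$ attains its minimum value $0$ at the interior point $p$.

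We then apply E. Hopf's strong minimum principle for supersolutions ($Lv\le0$ with $L$ having no zeroth-order term) on the connected open set $U$. It forces $v\equiv0$ on $U$. An alternative route is to write $(\Delta-\lambda)$ acting on $-v$ and use the version with $c\le0$ for nonpositive maxima; either form works.

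Finally we globalize. The eigenfunction $v$ solves an elliptic equation with smooth (indeed real-analytic in suitable coordinates, but we avoid needing that) coefficients and vanishes on an open set. By Aronszajn's unique continuation theorem for second-order elliptic operators, $v$ vanishes identically on the connected manifold $M$. This contradicts the assumption that $v$ is nonzero, so $U$ must contain a point where $v<0$, and by the symmetric argument also a point where $v>0$.

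The main technical point is making sure the hypotheses of the strong minimum principle apply locally, namely that in coordinates $\Delta$ is uniformly elliptic on a slightly smaller compact ball, and invoking unique continuation rather than analyticity. Since the paper explicitly avoids real analyticity, Aronszajn's theorem, valid for smooth Riemannian metrics, is the intended tool there. Note that $\lambda>0$ is used exactly to get the superharmonic sign; for $\lambda=0$ the conclusion would also hold but for a different reason.
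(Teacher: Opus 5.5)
Your proof is correct and is essentially the paper's argument: assume $v\ge0$ on a connected coordinate ball about the zero, note $\Delta v=-\lambda v\le0$, apply the strong minimum principle to get $v\equiv0$ on the ball, globalize with Aronszajn's unique continuation theorem, and repeat with $-v$. One small correction to your closing aside: the same minimum-principle argument covers $\lambda=0$, and even $\lambda<0$ by using $\Delta-|\lambda|$, whose zeroth-order coefficient is nonpositive, so $\lambda>0$ is convenient for this lemma rather than essential.
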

\begin{proof}
Suppose that $v(p)=0$ and $v\geq0$ on a connected coordinate ball $U$ centered at $p$. Then we have $\Delta v=-\lambda v\leq0$ on $U$. By the strong minimum principle for superharmonic functions, we see that $v$ must vanish identically on $U$. By Aronszajn's unique continuation theorem for second-order elliptic equations with smooth coefficients \cite{Aronszajn}, it follows that $v\equiv0$ on the entire connected manifold $M$, which contradicts our assumption that $v$ is nonzero. Applying the same argument to $-v$ on arbitrarily small coordinate balls around $p$, we see that $v$ takes both positive and negative values in every neighborhood of $p$.
\end{proof}

\begin{lemma}[Generic regular zero levels]\label{lem:generic}
Let $M$ be a closed smooth manifold, let $V$ be a finite-dimensional complex vector space, and let $a:M\to V\setminus\{0\}$ be a smooth map. Then the set of complex linear functionals $\ell\in V^*$ for which $0$ is a regular value of $\operatorname{Im}(\ell\circ a)$ is dense in $V^*$.
\end{lemma}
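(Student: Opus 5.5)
This is a parametric transversality (Sard) argument: we build a family of real-valued functions indexed by functionals and show that the family, viewed as one map, is a submersion. We identify $V^*$ with a real vector space of dimension $2N$, where $N=\dim_\C V$, and consider the evaluation map
\[
F: V^*\times M\to\R,\qquad F(\ell,p)=\operatorname{Im}\bigl(\ell(a(p))\bigr).
\]
The map $F$ is smooth, and it is real-linear in $\ell$ for each fixed $p$.

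The key step is to show that $0$ is a regular value of $F$, and in fact that $F$ is a submersion everywhere. Fix $(\ell,p)$. It suffices to differentiate in the $\ell$-direction alone, which gives the real-linear map $\eta\mapsto\operatorname{Im}\eta(a(p))$ on $V^*$. Since $a(p)\neq0$, we can choose a complex functional $\eta_0$ with $\eta_0(a(p))=1$. Then $\eta=i\eta_0$ satisfies $\operatorname{Im}\eta(a(p))=1$, so the differential is onto $\R$. This is the only place the hypothesis $a\neq0$ is used, and it is also the reason we may restrict attention to $\operatorname{Im}$: multiplying a functional by $i$ rotates the value into the imaginary axis.

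It follows that $Z:=F^{-1}(0)$ is a smooth embedded submanifold of $V^*\times M$ of codimension one. Let $\pi:Z\to V^*$ be the projection. By the parametric transversality theorem, $0$ is a regular value of $F_\ell=F(\ell,\cdot)$ exactly when $\ell$ is a regular value of $\pi$. To check this equivalence directly, take $(\ell,p)\in Z$ with $d\pi$ surjective at that point. Given any $w\in\R$, choose $\eta$ with $dF(\eta,0)=w$. Surjectivity of $d\pi$ supplies a tangent vector $(\eta,\xi)\in T_{(\ell,p)}Z$. Then $dF(0,\xi)=-w$, so $dF_\ell$ is onto at $p$. Conversely, if $dF_\ell$ is onto at $p$, then any $\eta$ lifts, so $d\pi$ is onto.

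Sard's theorem applies to the smooth map $\pi$ between second-countable manifolds and shows that the critical values of $\pi$ have measure zero in $V^*$. Their complement is therefore dense. It consists of those $\ell$ for which either $F_\ell^{-1}(0)$ is empty or $\pi$ is submersive along the entire fiber. In both cases $0$ is a regular value of $\operatorname{Im}(\ell\circ a)$, which proves the lemma.

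Because $M$ is compact, the set of good $\ell$ is even open. We note this but do not need it. We also expect no real obstacle beyond making the submersion computation explicit; the argument is the standard transversality lemma. The only subtlety is dimensional: $\pi$ maps a manifold of dimension $2N+\dim M-1$ to one of dimension $2N$, and Sard's theorem requires only smoothness, which holds here.
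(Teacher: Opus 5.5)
Your proposal is correct and follows essentially the same route as the paper. In both, the map $(\ell,p)\mapsto\operatorname{Im}\ell(a(p))$ is a submersion because $a(p)\neq0$, the regular values of the projection $\pi$ from its zero set to $V^*$ are exactly those $\ell$ for which $0$ is a regular value of $\operatorname{Im}(\ell\circ a)$, and Sard's theorem then gives density; your explicit choice $\eta=i\eta_0$ and your lifting argument for the equivalence only fill in details that the paper states more briefly.
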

\begin{proof}
Let us consider the smooth map $\Phi:V^*\times M\to\R$ defined by $\Phi(\ell,p):=\operatorname{Im}\ell(a(p))$, where we view $V^*$ as a real vector space. Since $a(p)\ne0$ for all $p\in M$, the partial differential of $\Phi$ in the $V^*$-direction is surjective at every point $(\ell,p)\in V^*\times M$. Consequently, $0$ is a regular value of $\Phi$, and $N:=\Phi^{-1}(0)$ is a smooth submanifold of $V^*\times M$.

Now consider the projection $\pi:N\to V^*$ given by $\pi(\ell,p)=\ell$, and write $v_\ell(p):=\operatorname{Im}\ell(a(p))$. At any point $(\ell,p)\in N$, the tangent space of $N$ is given by
\[
T_{(\ell,p)}N=\bigl\{(\eta,X)\in V^*\times T_pM:
\operatorname{Im}\eta(a(p))+d_pv_\ell(X)=0\bigr\}.
\]
Note that the differential $d\pi_{(\ell,p)}:T_{(\ell,p)}N\to V^*$ sends $(\eta,X)$ to $\eta$. If $d_pv_\ell\ne0$, then for every $\eta\in V^*$ we can choose $X\in T_pM$ satisfying $\operatorname{Im}\eta(a(p))+d_pv_\ell(X)=0$, which implies that $d\pi_{(\ell,p)}$ is surjective. Conversely, if $d_pv_\ell=0$, then the image of $d\pi_{(\ell,p)}$ is contained in the proper real hyperplane $\{\eta\in V^*:\operatorname{Im}\eta(a(p))=0\}$, so $d\pi_{(\ell,p)}$ is not surjective. Therefore, $\ell\in V^*$ is a regular value of $\pi$ if and only if $0$ is a regular value of $v_\ell$. Since $\pi:N\to V^*$ is a smooth map between finite-dimensional smooth manifolds, we can apply Sard's theorem \cite{Hirsch} to conclude that the regular values of $\pi$ have full measure and hence are dense in $V^*$. Note that this also covers the case where $v_\ell^{-1}(0)$ is empty, for which $0$ is vacuously a regular value.
\end{proof}

\begin{proposition}[Uniform eigenfunction principle]\label{prop:geometry}
Let $M$ be a connected closed smooth Riemannian manifold with the two-sided separation property, let $V$ be a finite-dimensional complex vector space, and let $a:M\to V\setminus\{0\}$ be a smooth map. Suppose that there exists a constant $\lambda>0$ such that
\[
\Delta(\ell\circ a)=-\lambda(\ell\circ a)\qquad\text{for all }\ell\in V^*.
\]
Then the image $(\ell\circ a)(M)$ is star-shaped about zero for every $\ell\in V^*$.
\end{proposition}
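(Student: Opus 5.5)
Fix $\ell\in V^*$ and a point $z_0=\ell(a(p_0))\neq 0$; we must show $tz_0\in(\ell\circ a)(M)$ for all $t\in[0,1]$. After multiplying $\ell$ by $e^{-i\arg z_0}$, which rotates the image, we may assume $z_0>0$ is real. Write $F=\ell\circ a=u+iv$ with $u=\operatorname{Re}F$ and $v=\operatorname{Im}F$. Both are real eigenfunctions with eigenvalue $\lambda$, since $\Delta$ is real and $\operatorname{Re}\ell,\operatorname{Im}\ell$ arise from complex functionals: for instance $v=\operatorname{Im}(\ell\circ a)$ and $u=\operatorname{Im}((i\ell)\circ a)$. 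The goal is to locate the segment $[0,z_0]$ inside $F(M)$ by finding a connected set on which $v=0$, $u$ vanishes somewhere, and $u$ reaches the value $z_0$.

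\emph{Regular case.} Suppose first that $0$ is a regular value of $v$. Then $p_0\in v^{-1}(0)$, and I let $C$ be the connected component of $v^{-1}(0)$ containing $p_0$. By Lemma~\ref{lem:green}, $u(C)$ is a compact interval containing $0$, and it also contains $u(p_0)=z_0$. Since $F=u$ on $C$, this gives $[0,z_0]\subseteq u(C)\subseteq F(M)$, which settles this case.

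\emph{General case by perturbation.} By Lemma~\ref{lem:generic}, choose $\ell_n\to\ell$ such that $0$ is a regular value of $v_n=\operatorname{Im}(\ell_n\circ a)$. I must ensure that the nodal set of $v_n$ passes near $p_0$, so that the relevant component carries a $u_n$-value near $z_0$. Here I use Lemma~\ref{lem:sign}. If $v\not\equiv0$, then every neighborhood $U$ of $p_0$ contains points where $v>0$ and points where $v<0$. By uniform convergence, $v_n$ also takes both signs in $U$ for large $n$, so $v_n$ has a zero $p_n$ in a connected $U$, and $p_n\to p_0$. If instead $v\equiv0$, then $F$ is real-valued. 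In that case I either perturb $\ell_n$ by small imaginary multiples, so that $v_n$ is a small nonzero multiple of an eigenfunction that changes sign near $p_0$, or argue directly. Indeed, $u=F$ is a nonzero real eigenfunction with $\lambda>0$, so $\int_M u=0$ and hence $u$ takes the value $0$; $M$ is connected, so $u(M)$ is an interval containing $0$ and $z_0$. Passing to nonzero $v$ through the sequence is therefore unnecessary here.

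Now let $C_n$ be the component of $v_n^{-1}(0)$ containing $p_n$. By the regular case, $F_n(C_n)=u_n(C_n)$ is an interval containing $0$ and $u_n(p_n)\to z_0$. So for each fixed $t\in[0,1)$ and large $n$ there is $q_n$ with $F_n(q_n)=t\,u_n(p_n)$. By compactness of $M$, pass to a limit $q_n\to q$. Since $F_n\to F$ uniformly, $F(q)=tz_0$. The endpoint $t=1$ is trivial. Since $z_0$ was an arbitrary nonzero point of the image, and $0\in F(M)$ also follows (either from the above with $t=0$ or because the image is nonempty and the argument applies), the image is star-shaped about $0$.

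The main obstacle is the perturbation step: guaranteeing that the nodal components $C_n$ for the perturbed functionals actually pass near $p_0$, and that the values along them converge to the right segment. Lemma~\ref{lem:sign}, combined with uniform convergence on the compact manifold $M$, is what I expect to resolve this. The degenerate case $v\equiv 0$ must be handled separately by the mean-zero argument.
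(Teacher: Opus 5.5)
Your proposal is correct and follows the paper's proof essentially step for step: you rotate so that $F(p_0)$ is real, apply Lemma~\ref{lem:green} on the nodal component through a zero of a generic perturbation $v_n$, use Lemma~\ref{lem:sign} to place that zero near $p_0$, handle $v\equiv0$ by the mean-zero argument, and pass to a compactness limit. The only point to tighten is getting $p_n\to p_0$, which needs a diagonal choice of perturbations matched to shrinking balls; the paper makes this explicit by requiring $\|\ell_j-\ell\|<\min\{j^{-1},b_j/(2A)\}$.
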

\begin{proof}
We fix a norm on $V$ and equip $V^*$ with the dual norm. Take any functional $\ell\in V^*$, any point $p_0\in M$, and any $t\in[0,1]$, and put $F:=\ell\circ a$. We want to show that $tF(p_0)\in F(M)$. If $F(p_0)=0$, the claim is trivial. Thus we assume $F(p_0)\neq 0$. By multiplying $\ell$ by a unimodular complex number, we may assume without loss of generality that $F(p_0)$ is real and nonzero. Let us write $F=u+iv$ for its real and imaginary parts; in particular, we have $v(p_0)=0$ and $F(p_0)=u(p_0)$.

If $v\equiv0$ on $M$, then $F=u$ is real-valued, and hence $F(M)\subset\R$ is a compact interval because $M$ is connected. Integrating $\Delta F=-\lambda F$ over $M$, we have $-\lambda\int_M F\,dV=\int_M\Delta F\,dV=0$. Since $\lambda>0$, we get $\int_M F\,dV=0$, which implies $0\in F(M)$. Thus the entire closed interval between $0$ and $F(p_0)$ is contained in $F(M)$.

Now we assume that $v\not\equiv0$. Let $\{U_j\}_{j=1}^\infty$ be a sequence of path-connected coordinate balls centered at $p_0$ whose diameters tend to $0$ as $j\to\infty$. Since $v(p_0)=0$, by Lemma~\ref{lem:sign}, we can pick points $r_j^+, r_j^-\in U_j$ such that $v(r_j^+)>0>v(r_j^-)$. Put $b_j:=\min\{v(r_j^+),-v(r_j^-)\}>0$ and $A:=\max_{p\in M}\|a(p)\|>0$. By Lemma~\ref{lem:generic}, we can choose a sequence of functionals $\ell_j\in V^*$ such that $0$ is a regular value of $v_j:=\operatorname{Im}(\ell_j\circ a)$ and
\[
\|\ell_j-\ell\|<\min\bigl\{j^{-1},\,b_j/(2A)\bigr\}.
\]
Write $F_j:=\ell_j\circ a=u_j+iv_j$. Then $F_j$ converges uniformly to $F$ on $M$:
\[
\sup_{p\in M}|F_j(p)-F(p)|\le A\|\ell_j-\ell\|\longrightarrow0\qquad(j\to\infty).
\]
Moreover, by our choice of $\|\ell_j-\ell\|<b_j/(2A)$, we have $v_j(r_j^+)>b_j/2>0$ and $v_j(r_j^-)<-b_j/2<0$. Joining $r_j^+$ and $r_j^-$ by a continuous path inside $U_j$, we find a point $p_j\in U_j$ such that $v_j(p_j)=0$. Since $\operatorname{diam}(U_j)\to 0$, we have $p_j\to p_0$ as $j\to\infty$.

Let $C_j$ be the connected component of the regular level set $v_j^{-1}(0)$ that contains $p_j$. Since $\Delta u_j=-\lambda u_j$ and $\Delta v_j=-\lambda v_j$, it follows from Lemma~\ref{lem:green} that $u_j(C_j)$ is a compact interval containing both $0$ and $u_j(p_j)$. Since $t\in[0,1]$ and $v_j=0$ on $C_j$, we can choose a point $q_j\in C_j$ such that
\[
F_j(q_j)=u_j(q_j)=t\,u_j(p_j)=tF_j(p_j).
\]
Since $M$ is compact, after passing to a subsequence, we may assume that $q_j\to q$ for some point $q\in M$. Letting $j\to\infty$ and using the uniform convergence $F_j\to F$ together with $p_j\to p_0$, we obtain
\[
F(q)=\lim_{j\to\infty}F_j(q_j)=t\lim_{j\to\infty}F_j(p_j)=tF(p_0).
\]
Finally, undoing the initial unimodular rotation of $\ell$, we conclude that $(\ell\circ a)(M)$ is star-shaped about zero. This finishes the proof.
\end{proof}

\begin{remark}[On the hypotheses of Proposition~\ref{prop:geometry}]\label{rem:limits}
Note that Proposition~\ref{prop:geometry} requires neither real analyticity nor simple connectivity of $M$: smoothness of $M$ and $a$ is sufficient (by Aronszajn's elliptic unique continuation theorem), and the only topological requirement is the two-sided separation property. By Lemma~\ref{lem:separation}, the condition $H^1(M;\R)=0$ is sufficient for this separation property; note that $H^1(M;\R)=0$ is strictly weaker than $H^1(M;\mathbb Z/2\mathbb Z)=0$ (for instance, $M=\mathbb{RP}^3$ satisfies $H^1(\mathbb{RP}^3;\R)=0$ while $H^1(\mathbb{RP}^3;\mathbb Z/2\mathbb Z)\neq 0$). In the purely real case ($v\equiv 0$), we do not even need the nonvanishing of $a$ or the separation property, because the image of $M$ is already an interval containing zero. On the other hand, in the complex-valued case, neither the positivity of $\lambda$ nor the topological condition can be dropped: for example, on the unit circle $M=S^1$, the eigenfunction $F(e^{it})=e^{it}$ satisfies $\Delta F=-F$ ($\lambda=1$), but its image is $S^1$, which is not star-shaped; similarly, a nonzero constant function ($\lambda=0$) has a single nonzero point as its image. Here we use the assumption $a(p)\neq 0$ in Lemma~\ref{lem:generic} to guarantee that generic zero levels are regular, though we do not claim this nonvanishing condition is strictly necessary.
\end{remark}

\begin{remark}[Role of the common eigenvalue]
If $u$ and $v$ were eigenfunctions of $-\Delta$ with two distinct
eigenvalues $\lambda_1\neq\lambda_2$, then the volume integral $\int_D(u\Delta v-v\Delta u)\,dV=(\lambda_1-\lambda_2)\int_D uv\,dV$ in Green's identity would not vanish in general. Thus, having a common Laplace eigenvalue for the real and imaginary parts $u$ and $v$ is essential to our argument. In the setting of compact adjoint orbits, this common eigenvalue comes directly from the action of the adjoint Casimir operator, without any case-by-case calculation of roots or weights.
\end{remark}

\section{Proofs of the main results}
Let $B_\kalg$ denote the real Killing form on the compact semisimple Lie algebra $\kalg$, and equip $\kalg$ with the inner product $\langle X,Y\rangle:=-B_\kalg(X,Y)$.

\begin{lemma}[Normalized adjoint Casimir operator]\label{lem:casimir}
Let $E_1,\dots,E_d$ be an orthonormal basis of $\kalg$ with respect to $\langle\cdot,\cdot\rangle=-B_\kalg$. Then we have
\[
\sum_{j=1}^d(\operatorname{ad}E_j)^2=-I_\kalg,
\]
and the same identity holds on the complexification $\g=\kalg\otimes_\R\C$.
\end{lemma}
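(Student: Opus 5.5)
The identity follows from computing the trace form of the operator
\[
\Omega:=\sum_{j=1}^d(\operatorname{ad}E_j)^2
\]
against the Killing form itself. First, $\Omega$ does not depend on the choice of orthonormal basis: it is the image of the Casimir element $\sum_j E_j\otimes E_j$, and this element is invariant under the orthogonal group of $\langle\cdot,\cdot\rangle$. Second, $\Omega$ commutes with $\operatorname{ad}Y$ for every $Y\in\kalg$. To see this, write $[\operatorname{ad}Y,\operatorname{ad}E_j]=\operatorname{ad}[Y,E_j]$ and $[Y,E_j]=\sum_k c_{kj}E_k$. By the $\operatorname{ad}$-invariance of $B_\kalg$, the matrix $(c_{kj})$ is skew-symmetric. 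Hence
\[
[\operatorname{ad}Y,\Omega]=\sum_{j,k}c_{kj}\bigl(\operatorname{ad}E_k\operatorname{ad}E_j+\operatorname{ad}E_j\operatorname{ad}E_k\bigr)=0 .
\]

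Rather than invoking Schur's lemma on each simple ideal, I would verify $\Omega=-I$ directly, which avoids any decomposition into ideals. For $X,Y\in\kalg$,
\[
B_\kalg(\Omega X,Y)=\sum_j B_\kalg\bigl([E_j,[E_j,X]],Y\bigr)=-\sum_j B_\kalg\bigl([E_j,X],[E_j,Y]\bigr)
\]
by invariance. Rewriting $[E_j,X]=-\operatorname{ad}X(E_j)$ and similarly for $Y$, this equals $-\sum_j B_\kalg(\operatorname{ad}X\,E_j,\operatorname{ad}Y\,E_j)$. Since $\{E_j\}$ is orthonormal for $-B_\kalg$, the sum $\sum_j B_\kalg(AE_j,BE_j)$ equals $-\tr(B^{\mathsf T}A)$, where ${}^{\mathsf T}$ denotes the adjoint with respect to $-B_\kalg$. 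Because $\operatorname{ad}Y$ is skew-adjoint for $-B_\kalg$, we have $(\operatorname{ad}Y)^{\mathsf T}=-\operatorname{ad}Y$. Tracking these signs gives
\[
B_\kalg(\Omega X,Y)=-\tr(\operatorname{ad}Y\operatorname{ad}X)=-B_\kalg(X,Y).
\]
Since $B_\kalg$ is nondegenerate, it follows that $\Omega=-I_\kalg$. In fact this computation makes the commutation argument of the first paragraph unnecessary, though that argument remains a useful sanity check.

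For the complexification, each $\operatorname{ad}E_j$ acts on $\g$ as the complex-linear extension of its action on $\kalg$. The operator $\Omega$ on $\g$ is therefore the complex-linear extension of $-I_\kalg$, which is $-I_\g$.

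The only real obstacle is the sign bookkeeping in the trace identity. I would double-check it on $\mathfrak{su}(2)$. There, with the orthonormal basis of $-B$ (whose elements have $\operatorname{ad}$ acting as rotation generators of norm $1/\sqrt2$), one finds $\Omega=3\cdot(-\tfrac12)\cdot\tfrac23\, I=-I$. This is consistent with the general fact that $\tr\Omega=-\dim\kalg$.
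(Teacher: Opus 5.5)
Your proof is correct and is essentially the paper's argument. Both pair $\Omega X$ with $Y$, use invariance to move one $\operatorname{ad}E_j$ across, and then identify $\sum_j B_\kalg(\operatorname{ad}X\,E_j,\operatorname{ad}Y\,E_j)$ with $\tr(\operatorname{ad}X\,\operatorname{ad}Y)=B_\kalg(X,Y)$ by computing the trace in the orthonormal basis and using skew-adjointness; this gives $\Omega=-I_\kalg$ without splitting $\kalg$ into simple ideals, and your first paragraph (basis independence and commutation with $\operatorname{ad}Y$) is, as you note, not needed.
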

\begin{proof}
Put $T:=\sum_{j=1}^d(\operatorname{ad}E_j)^2$. Since the Killing form $B_\kalg$ is $\operatorname{ad}$-invariant, each operator $\operatorname{ad}E_j$ is skew-adjoint with respect to $\langle\cdot,\cdot\rangle$. Thus, for any $X,Y\in\kalg$, we have
\[
\langle TX,Y\rangle
=-\sum_{j=1}^d\langle[E_j,X],[E_j,Y]\rangle
=-\sum_{j=1}^d\langle[X,E_j],[Y,E_j]\rangle.
\]
On the other hand, expanding the trace in the definition of $B_\kalg(X,Y)$ with respect to the orthonormal basis $\{E_j\}_{j=1}^d$ and using the skew-adjointness of $\operatorname{ad}X$, we get
\[
B_\kalg(X,Y)=\tr_\R(\operatorname{ad}X\,\operatorname{ad}Y)
=\sum_{j=1}^d\langle(\operatorname{ad}X\,\operatorname{ad}Y)E_j,E_j\rangle
=-\sum_{j=1}^d\langle[Y,E_j],[X,E_j]\rangle.
\]
Comparing the two expressions, we obtain $\langle TX,Y\rangle=B_\kalg(X,Y)=-\langle X,Y\rangle$ for all $X,Y\in\kalg$, which gives $T=-I_\kalg$. Extending $T$ complex-linearly to $\g=\kalg\otimes_\R\C$, we see that the same identity holds on $\g$. Note that this proof applies directly to any compact semisimple Lie algebra without decomposing it into simple ideals.
\end{proof}

\begin{lemma}[Common Laplace eigenvalue]\label{lem:laplace}
Let $K$ be a connected compact semisimple Lie group equipped with the bi-invariant Riemannian metric induced by $-B_\kalg$. Then for any $x\in\g$ and $f\in\g^*$, the function
\[
F_f(k):=f(\Ad(k)x)\qquad\text{satisfies}\qquad\Delta F_f=-F_f.
\]
Moreover, if $x\ne0$, then the smooth orbit map $a_x(k):=\Ad(k)x$ never vanishes on $K$.
\end{lemma}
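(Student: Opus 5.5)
The plan is to compute the Laplacian on the Lie group directly from left-invariant vector fields, and then apply the Casimir identity of Lemma~\ref{lem:casimir}.

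\textbf{Laplacian as a sum of squares.} For a compact Lie group with a bi-invariant metric, take an orthonormal basis $E_1,\dots,E_d$ of $\kalg$ with respect to $-B_\kalg$, and let $\tilde E_j$ be the corresponding left-invariant vector fields. I will use the standard fact
\[
\Delta=\sum_{j=1}^d \tilde E_j^2 .
\]
To justify it, note that for a bi-invariant metric the Levi-Civita connection satisfies $\nabla_{\tilde X}\tilde Y=\tfrac12[\tilde X,\tilde Y]$. Hence $\nabla_{\tilde E_j}\tilde E_j=0$ and each $\tilde E_j$ is divergence-free. It follows that $\operatorname{div}(\nabla F)=\sum_j \tilde E_j(\tilde E_j F)$, since $\nabla F=\sum_j(\tilde E_jF)\tilde E_j$.

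\textbf{Apply to $F_f$.} Compute
\[
(\tilde E_jF_f)(k)=\frac{d}{ds}\Big|_{s=0} f(\Ad(k\exp sE_j)x)=f(\Ad(k)\operatorname{ad}(E_j)x).
\]
Iterating once more gives $(\tilde E_j^2F_f)(k)=f(\Ad(k)(\operatorname{ad}E_j)^2x)$. Summing over $j$ and using linearity of $f\circ\Ad(k)$ together with Lemma~\ref{lem:casimir}, which gives $\sum_j(\operatorname{ad}E_j)^2x=-x$ on $\g$, we obtain $\Delta F_f(k)=-f(\Ad(k)x)=-F_f(k)$. Because $f$ is complex-linear, the real and imaginary parts of $F_f$ each satisfy the same equation, with $\Delta$ acting on real functions.

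\textbf{Nonvanishing.} Since $\Ad(k)$ is invertible, $\Ad(k)x=0$ forces $x=0$. So for $x\ne0$ the orbit map $a_x$ never vanishes.

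The only nontrivial input is the identity $\Delta=\sum_j\tilde E_j^2$. Its key point is that the left-invariant orthonormal frame is geodesic for a bi-invariant metric ($\nabla_{\tilde E_j}\tilde E_j=\tfrac12[\tilde E_j,\tilde E_j]=0$). I would either cite this as standard or give the short Koszul-formula verification. Everything else is a direct computation.
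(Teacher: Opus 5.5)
Your proposal is correct and follows essentially the same route as the paper's proof: write $\Delta=\sum_j\tilde E_j^2$ for the bi-invariant metric, differentiate $f(\Ad(k\exp sE_j)x)$ twice to get $f(\Ad(k)(\operatorname{ad}E_j)^2x)$, apply Lemma~\ref{lem:casimir}, and use the invertibility of $\Ad(k)$ for nonvanishing. Your justification of $\Delta=\sum_j\tilde E_j^2$ is sound and supplements what the paper simply cites as standard; the divergence-free property of the $\tilde E_j$, which holds because each $\operatorname{ad}E_j$ is skew-adjoint, is all that step needs.
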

\begin{proof}
Let $X_1,\dots,X_d$ be the left-invariant vector fields on $K$ corresponding to the orthonormal basis $E_1,\dots,E_d$ of $\kalg$. Since the Riemannian metric on $K$ is bi-invariant, the Laplace--Beltrami operator on smooth functions is given by $\Delta=\sum_{j=1}^d X_j^2$. Differentiating $F_f$ twice along the one-parameter curves $t\mapsto k\exp(tE_j)$, we have
\[
X_j^2F_f(k)=f\bigl(\Ad(k)(\operatorname{ad}E_j)^2x\bigr).
\]
Summing over $j=1,\dots,d$ and applying Lemma~\ref{lem:casimir}, we get $\Delta F_f(k)=-F_f(k)$. Finally, since $\Ad(k)$ is a linear automorphism of $\g$ for every $k\in K$, we see that $a_x(k)=\Ad(k)x\neq 0$ whenever $x\neq 0$. This finishes the proof.
\end{proof}

\begin{proof}[Proof of Theorem~\ref{thm:semisimple}]
If $x=0$ or $\g=0$, the result is immediate. Now assume that $\g\neq 0$ and $x\neq 0$. Let $\widetilde K$ be the universal covering group of $K$. Since $K$ is a connected compact semisimple Lie group, it is a standard fact (see \cite{Knapp}) that $\widetilde K$ is also compact and semisimple. Since the covering homomorphism $\widetilde K\to K$ is surjective and its kernel acts trivially under the adjoint representation on $\g$, replacing $K$ by $\widetilde K$ does not change the numerical range $W_f(x)$.

Since $\widetilde K$ is connected, closed, and simply connected, it follows from Lemma~\ref{lem:separation} that $\widetilde K$ has the two-sided separation property. By Lemma~\ref{lem:laplace}, the orbit map $a_x:\widetilde K\to\g\setminus\{0\}$ is smooth and nonvanishing, and for every $f\in\g^*$ the function $f\circ a_x$ satisfies $\Delta(f\circ a_x)=-(f\circ a_x)$ with the common positive eigenvalue $\lambda=1$. Applying Proposition~\ref{prop:geometry} with $M=\widetilde K$ and $V=\g$, we conclude that $W_f(x)=(f\circ a_x)(\widetilde K)$ is star-shaped about zero. This finishes the proof.
\end{proof}

\begin{proof}[Proof of Corollary~\ref{cor:reductive}]
Since $K$ is connected, the decomposition $\g=\zalg\oplus\salg$ is invariant under $\Ad(K)$, and $K$ acts trivially on the center $\zalg$. Moreover, the connected Lie subgroup $K_{\salg}\subseteq K$ with Lie algebra $\kalg\cap\salg$ is a connected compact semisimple Lie group whose adjoint orbits in $\salg$ coincide with those of $K$. Therefore, for any $x=x_{\zalg}+x_{\salg}\in\g$ and $f\in\g^*$, we have
\[
f(\Ad(k)x)=f(x_{\zalg})+f|_{\salg}(\Ad(k)x_{\salg}),\qquad k\in K.
\]
By Theorem~\ref{thm:semisimple}, the set $\{f|_{\salg}(\Ad(k)x_{\salg}):k\in K\}$ is star-shaped about zero. Since $W_f(x)=f(x_{\zalg})+W_{f|_{\salg}}(x_{\salg})$, we obtain
\[
tW_f(x)+(1-t)f(x_{\zalg})\subseteq W_f(x),\qquad 0\leq t\leq1.
\]

To deduce Tam's original inner-product formulation, for a fixed $b\in\g$ we define the linear functional $f_b\in\g^*$ by $f_b(c):=B_\theta(c,b)$. Since $B_\theta$ is $K$-invariant, we have
\[
B_\theta(a,\Ad(k)b)=B_\theta(\Ad(k^{-1})a,b)=f_b(\Ad(k^{-1})a),\qquad k\in K.
\]
As $k$ runs over the group $K$, so does $k^{-1}$. Thus we have $W_a(b)=W_{f_b}(a)$, which is star-shaped about
$f_b(a_{\zalg})=B_\theta(a_{\zalg},b_{\zalg})$. This finishes the proof.
\end{proof}

Let us illustrate our geometric proof by recovering the Cheung--Tsing theorem.
Let $n\ge2$, take two traceless matrices $A_0,C_0\in\mathfrak{sl}_n(\C)$, and consider the function
\[
F(U):=\tr(C_0UA_0U^*),\qquad U\in SU(n).
\]
We equip $SU(n)$ with the bi-invariant Riemannian metric induced by the negative
Killing form on $\mathfrak{su}(n)$, and let $E_1,\ldots,E_{n^2-1}$ be an
orthonormal basis of $\mathfrak{su}(n)$. By Lemma~\ref{lem:casimir}, the normalized adjoint Casimir identity takes the form
\[
\sum_{j=1}^{n^2-1}[E_j,[E_j,A_0]]=-A_0.
\]
Consequently, we obtain
\[
\Delta F(U)=\tr\!\left(C_0U\Bigl(\sum_{j=1}^{n^2-1}[E_j,[E_j,A_0]]\Bigr)U^*\right)
=-F(U).
\]
To see how the nodal-hypersurface argument works here, take any value $z=F(U_0)$ with $U_0\in SU(n)$. If $z=0$, the
radial segment is trivial. Otherwise, we multiply $F$ by a
unimodular complex scalar so that $F(U_0)$ becomes real and nonzero, and we still denote the
rotated function by $F=u+iv$. Then both $u$ and $v$ satisfy $\Delta u=-u$ and $\Delta v=-v$. Suppose first that $0$ is a regular value of $v$,
and let $N$ be the connected component of $v^{-1}(0)$ containing $U_0$.
Then $N$ is a compact two-sided hypersurface in $SU(n)$, and since $SU(n)$ is simply connected, $N$ separates $SU(n)$ into two components.
Let $D$ be one of the complementary domains with boundary $\partial D=N$. By Green's identity, we have
\[
0=\int_D(u\Delta v-v\Delta u)\,dV
 =\int_N u\,\partial_\nu v\,dS.
\]
Since $\partial_\nu v$ has a constant nonzero sign on $N$, the function $u$
must vanish at some point on $N$. Since $N$ is connected, it follows that $F(N)=u(N)$
is a compact real interval containing both $0$ and $F(U_0)$, and hence contains
the entire line segment joining $0$ and $F(U_0)$. After undoing the rotation, we obtain the radial segment
from $0$ to $z$ for the original function $F$. When $0$ is not a regular
value of $v$, the perturbation argument in the proof of
Proposition~\ref{prop:geometry} yields the same result (and the case $v\equiv 0$ is also covered there).

Now, for arbitrary matrices $A,C\in M_n(\C)$, we decompose them into scalar and traceless parts:
\[
A=A_0+\frac{\tr A}{n}I,\qquad
C=C_0+\frac{\tr C}{n}I,
\qquad \tr A_0=\tr C_0=0.
\]
Since scalar matrices are central, they only shift the numerical range by a constant translation:
\[
\{\tr(CUAU^*):U\in U(n)\}
=\frac{\tr A\,\tr C}{n}
 +\{\tr(C_0UA_0U^*):U\in SU(n)\}.
\]
Therefore, the $C$-numerical range of $A$ is star-shaped about
$(\tr A)(\tr C)/n$, which recovers the theorem of Cheung and Tsing
\cite{CheungTsing} without requiring a separate matrix-theoretic analysis of boundary points.

\section{Star-shapedness for representations of compact semisimple Lie groups}
The geometric principle in Proposition~\ref{prop:geometry} is not restricted to adjoint
representations. We record an extension to finite-dimensional complex representations.

Galindo, de la Harpe, and Vust \cite{GalindoHarpeVust} asked whether orbits
of compact connected irreducible complex linear groups meet every complex
hyperplane through zero. Gichev \cite[Corollary~1]{Gichev} answered this
affirmatively. He also proved a codimension-two orbit-intersection theorem
for real representations of compact connected semisimple groups whose
matrix coefficients have a common nonzero Laplace eigenvalue
\cite[Corollary~2]{Gichev}. These results give zero-inclusion for the
corresponding scalar orbit images. Here Proposition~\ref{prop:geometry}
yields the stronger radial star-shapedness property; a fixed component
translates the star center.

Let $K$ be a connected compact semisimple Lie group with Lie algebra $\kalg$,
equipped with the bi-invariant metric induced by the negative Killing form.
Let $E_1,\ldots,E_d$ be an orthonormal basis of $\kalg$, and let
$\rho:K\to\operatorname{GL}(V)$ be a finite-dimensional complex representation.
Its Casimir operator is
\[
 C_\rho:=-\sum_{j=1}^d d\rho(E_j)^2.
\]
This operator is independent of the orthonormal basis and commutes
with $\rho(K)$. Since $K$ is compact, we may choose a $K$-invariant
inner product on $V$. Each $d\rho(E_j)$ is then skew-adjoint, and hence
\[
 \langle C_\rho w,w\rangle
 =\sum_{j=1}^d\|d\rho(E_j)w\|^2\qquad(w\in V).
\]
Thus $C_\rho$ is positive semidefinite, and $C_\rho w=0$ holds precisely
when $d\rho(E_j)w=0$ for all $j$. Since $K$ is connected, this is
equivalent to $w$ being fixed by $K$. Consequently,
\[
 \ker C_\rho=V^K.
\]
Let
\[
 P_0:=\int_K\rho(k)\,dk
\]
denote the orthogonal projection onto $V^K$, where Haar measure is
normalized to have total mass one. In particular, $P_0$ commutes
with $\rho(K)$.

\begin{theorem}[Star-shapedness for a common Casimir eigenspace]\label{thm:representations}
Let $K$, $\rho$, and $P_0$ be as above. Suppose that $v\in V$ satisfies
\[
 v-P_0v\in\ker(C_\rho-cI)
\]
for some $c>0$. Then, for every complex-linear functional $f\in V^*$,
the set
\[
 W_f^\rho(v):=\{f(\rho(k)v):k\in K\}
\]
is star-shaped about $f(P_0v)$. More precisely,
\[
 tW_f^\rho(v)+(1-t)f(P_0v)\subseteq W_f^\rho(v),
 \qquad 0\leq t\leq1.
\]
\end{theorem}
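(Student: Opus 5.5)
Write $v=P_0v+w$ with $w:=v-P_0v$. The plan is to reduce to Proposition~\ref{prop:geometry} applied to the orbit map of $w$ on the universal cover, exactly as in the proof of Theorem~\ref{thm:semisimple}.

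\emph{Step 1: remove the fixed part.} Since $P_0$ commutes with $\rho(K)$ and $P_0v\in V^K$, we have $\rho(k)P_0v=P_0v$ for all $k\in K$. Hence
\[
f(\rho(k)v)=f(P_0v)+f(\rho(k)w),
\]
so $W_f^\rho(v)=f(P_0v)+W_f^\rho(w)$. It therefore suffices to show that $W_f^\rho(w)$ is star-shaped about zero. If $w=0$ this is trivial, so we assume $w\neq0$.

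\emph{Step 2: pass to the universal cover.} We replace $K$ by its universal covering group $\widetilde K$, which is compact, semisimple and simply connected (as in the proof of Theorem~\ref{thm:semisimple}). The representation pulls back to $\tilde\rho:=\rho\circ\pi$. This does not change the set of values, because $\pi$ is surjective. The Lie algebra, the metric, and $C_\rho$ are unchanged. By Lemma~\ref{lem:separation}, $\widetilde K$ has the two-sided separation property.

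\emph{Step 3: compute the Laplacian.} Consider $a(k):=\tilde\rho(k)w$, which is smooth and nonvanishing because each $\tilde\rho(k)$ is invertible. We argue as in Lemma~\ref{lem:laplace}. With left-invariant fields $X_j$, we have $\Delta=\sum_j X_j^2$ for the bi-invariant metric, and
\[
X_j^2(\ell\circ a)(k)=\ell\bigl(\tilde\rho(k)\,d\rho(E_j)^2w\bigr).
\]
Summing over $j$ gives
\[
\Delta(\ell\circ a)=-\ell\bigl(\tilde\rho(k)C_\rho w\bigr)=-c\,(\ell\circ a)
\]
for every $\ell\in V^*$. Now apply Proposition~\ref{prop:geometry} with $M=\widetilde K$ and $\lambda=c>0$. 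This shows that $(f\circ a)(\widetilde K)=W_f^\rho(w)$ is star-shaped about zero. Translating back by $f(P_0v)$ gives
\[
tW_f^\rho(v)+(1-t)f(P_0v)\subseteq W_f^\rho(v),\qquad 0\le t\le1.
\]

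\emph{Expected difficulty.} The argument is essentially routine given Section~3. The main points to get right are two. First, the Laplacian identity $\Delta=\sum_j X_j^2$ on a bi-invariant metric, together with the sign convention, must produce the eigenvalue $+c$ for $-\Delta$. Second, $P_0v$ must be genuinely $K$-fixed, which follows from the invariance of Haar measure: $\rho(k)P_0=P_0$. The hypothesis $c>0$ is used only to ensure $\lambda>0$ in Proposition~\ref{prop:geometry}.
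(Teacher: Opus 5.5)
Your proof is correct and follows essentially the paper's argument: you split off the $K$-fixed part $P_0v$, compute $\Delta(\ell\circ a)=-c\,(\ell\circ a)$ via left-invariant fields, and apply Proposition~\ref{prop:geometry}. The only differences are minor and both valid. You pass to the universal cover $\widetilde K$ to obtain the separation property, whereas the paper applies Lemma~\ref{lem:separation} directly on $K$ using $H^1(K;\R)=0$. You also apply the proposition on all of $V$ rather than on the eigenspace $\ker(C_\rho-cI)$ as the paper does.
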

\begin{proof}
Write $v=v_0+v_c$, where $v_0=P_0v$ and
$v_c=v-P_0v\in\ker(C_\rho-cI)$. Since $\rho(k)v_0=v_0$ for every
$k\in K$, we have
\[
 W_f^\rho(v)=f(v_0)+\{f(\rho(k)v_c):k\in K\}.
\]
If $v_c=0$, the result is immediate. Otherwise, let
$V_c=\ker(C_\rho-cI)$ and define the smooth orbit map
$a:K\to V_c\setminus\{0\}$ by $a(k)=\rho(k)v_c$.
The subspace $V_c$ is $K$-invariant because $C_\rho$ commutes with
$\rho(K)$, and $a$ never vanishes because $\rho(k)$ is invertible.
For the left-invariant vector fields corresponding to $E_1,\ldots,E_d$,
the bi-invariant metric gives
\[
 \Delta a(k)
 =\rho(k)\sum_{j=1}^d d\rho(E_j)^2v_c
 =-\rho(k)C_\rho v_c=-c\,a(k).
\]
Thus $\Delta(\ell\circ a)=-c(\ell\circ a)$ for every
$\ell\in V_c^*$. Since $K$ is connected, compact, and semisimple,
$H^1(K;\R)=0$. Lemma~\ref{lem:separation} therefore gives the
two-sided separation property. Proposition~\ref{prop:geometry},
applied to $a$ and $f|_{V_c}$, shows that
$\{f(\rho(k)v_c):k\in K\}$ is star-shaped about zero.
Translation by $f(v_0)$ gives the result.
\end{proof}

\begin{corollary}[Irreducible representations]\label{cor:irreducible}
If $\rho$ is a nontrivial irreducible finite-dimensional complex
representation of a connected compact semisimple Lie group $K$, then
$W_f^\rho(v)$ is star-shaped about zero for all $v\in V$ and $f\in V^*$.
\end{corollary}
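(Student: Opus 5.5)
The plan is to deduce the corollary directly from Theorem~\ref{thm:representations}. Two facts are needed: that $V^K=0$, so $P_0v=0$, and that $C_\rho$ acts on all of $V$ as a single positive scalar $c$.

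First, Schur's lemma applies because $C_\rho$ commutes with $\rho(K)$ and $V$ is an irreducible complex representation. Hence $C_\rho=cI$ for some $c\in\C$. The preceding discussion shows that $C_\rho$ is positive semidefinite with respect to a $K$-invariant inner product, so $c\ge0$.

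Second, the fixed subspace $V^K=\ker C_\rho$ is a $K$-invariant subspace of $V$, so by irreducibility it is either $0$ or $V$. If $V^K=V$, then $K$ acts trivially on $V$. An irreducible trivial representation is one-dimensional and trivial, which contradicts the assumption that $\rho$ is nontrivial. Therefore $V^K=0$, which gives both $\ker C_\rho=0$ and $c>0$. It also gives $P_0=0$, since $P_0$ is the orthogonal projection onto $V^K$.

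Now take any $v\in V$ and $f\in V^*$. Then $v-P_0v=v$, and $v\in V=\ker(C_\rho-cI)$ with $c>0$. Theorem~\ref{thm:representations} therefore applies and shows that $W_f^\rho(v)$ is star-shaped about $f(P_0v)=f(0)=0$. The case $v=0$ is already covered, because the theorem handles $v_c=0$ trivially.

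No step here is a genuine obstacle. The only point needing care is ruling out $V^K\neq0$, and that follows at once from irreducibility together with nontriviality.
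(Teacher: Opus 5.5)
Your proposal is correct and follows essentially the same route as the paper. You use Schur's lemma to get $C_\rho=cI$, then irreducibility plus nontriviality to force $V^K=\ker C_\rho=0$, which gives $c>0$ and $P_0=0$, and you conclude with Theorem~\ref{thm:representations}; your write-up just spells out the $V^K\in\{0,V\}$ step more explicitly than the paper does.
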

\begin{proof}
By Schur's lemma, $C_\rho=cI$ for some scalar $c$. The positivity
identity above shows that $c>0$, since an irreducible representation
with $c=0$ would be trivial. Hence $P_0=0$, and the assertion follows
from Theorem~\ref{thm:representations}.
\end{proof}

\begin{remark}
Theorem~\ref{thm:representations} applies to a particular orbit vector
$v$ even when the representation has other constituents with different
positive Casimir eigenvalues. In particular, if the spectrum of
$C_\rho$ is contained in $\{0,c\}$ for some $c>0$, then every
$v\in V$ satisfies its hypothesis, and
\[
 C_\rho=c(I-P_0).
\]
This includes direct sums of irreducible representations with the same
positive Casimir eigenvalue, together with any trivial summands. It also
includes the adjoint representation in Theorem~\ref{thm:semisimple}:
by Lemma~\ref{lem:casimir}, its Casimir operator is the identity,
even when the semisimple Lie algebra has several simple ideals.
Moreover, the star center is the Haar average of the values,
\[
 f(P_0v)=\int_K f(\rho(k)v)\,dk.
\]
For general orbit vectors involving several distinct positive Casimir
eigenvalues, the present argument does not give a conclusion.
\end{remark}

\section*{Acknowledgment}
T.-Y. Tam thanks Y. Luan for his warm hospitality during his visit to the School of Mathematics, Shandong University, in May 2026.

\section*{AI disclosure}
During the preparation of this work, the authors used ChatGPT to assist with language editing, exposition, and exploration of the representation-theoretic extension in Section~5. The authors are responsible for verifying the mathematical arguments and the final manuscript.

\section*{Funding}
Y. Luan is supported by the Shandong Provincial Natural Science Foundation (Grant No. ZR2025MS36).


\begin{thebibliography}{99}
\bibitem{Aronszajn} N. Aronszajn, A unique continuation theorem for solutions of elliptic partial differential equations or inequalities of second order, \emph{J. Math. Pures Appl.} (9) \textbf{36} (1957), 235--249.
\bibitem{CheungTam} W.-S. Cheung and T.-Y. Tam, Star-shapedness and $K$-orbits in complex semisimple Lie algebras, \emph{Canad. Math. Bull.} \textbf{54} (2011), no.~1, 44--55.
\bibitem{CheungTsing} W.-S. Cheung and N.-K. Tsing, The $C$-numerical range of matrices is star-shaped, \emph{Linear Multilinear Algebra} \textbf{41} (1996), no.~3, 245--250.
\bibitem{DjokovicTam} D. \v{Z}. \DJ okovi\'c and T.-Y. Tam, Some questions about semisimple Lie groups originating in matrix theory, \emph{Canad. Math. Bull.} \textbf{46} (2003), no.~3, 332--343.
\bibitem{GalindoHarpeVust} J. Galindo, P. de la Harpe, and T. Vust, Two observations on irreducible representations of groups, \emph{J. Lie Theory} \textbf{12} (2002), no.~2, 535--538.
\bibitem{Gichev} V. M. Gichev, A note on common zeroes of Laplace--Beltrami eigenfunctions, \emph{Ann. Global Anal. Geom.} \textbf{26} (2004), 201--208.
\bibitem{Hirsch} M. W. Hirsch, \emph{Differential Topology}, Springer, 1976.
\bibitem{Knapp} A. W. Knapp, \emph{Lie Groups Beyond an Introduction}, 2nd ed., Birkh\"auser, 2002.
\bibitem{Tam2001} T.-Y. Tam, On the shape of numerical ranges associated with Lie groups, \emph{Taiwanese J. Math.} \textbf{5} (2001), 497--506.
\bibitem{TamCheung} T.-Y. Tam and W.-S. Cheung, The $K$-orbit of a normal element in a complex semisimple Lie algebra, \emph{Pacific J. Math.} \textbf{238} (2008), 387--398.
\end{thebibliography}
\end{document}